\newtheorem{theorem}{Theorem}[section]
\newtheorem*{theorem*}{Theorem}
\theoremstyle{definition}
\newtheorem{definition}[theorem]{Definition}
\theoremstyle{remark}
\newtheorem{remark}[theorem]{Remark}
\newcommand{\F}{\mathcal{F}}
\newcommand{\conv}{\textrm{conv}}
\title{A complex analogue of the Goodman-Pollack-Wenger theorem}
\author{Daniel McGinnis}
\thanks{The author was supported by NSF grant DMS-1839918 (RTG)}
\begin{document}

\maketitle

\begin{abstract}
    A \textit{$k$-transversal} to family of sets in $\mathbb{R}^d$ is a $k$-dimensional affine subspace that intersects each set of the family.
    In 1957 Hadwiger provided a necessary and sufficient condition for a family of pairwise disjoint, planar convex sets to have a $1$-transversal. After a series of three papers among the authors Goodman, Pollack, and Wenger from 1988 to 1990, Hadwiger's Theorem was extended to necessary and sufficient conditions for $(d-1)$-transversals to finite families of convex sets in $\mathbb{R}^d$ with no disjointness condition on the family of sets. We prove an analogue of the Goodman-Pollack-Wenger theorem in the complex setting.
\end{abstract}

\section{Introduction}

The well-known Helly's theorem \cite{helly} states that if a finite family $\F$ of convex sets in $\mathbb{R}^d$ has the property that any choice of $d+1$ or less sets in $\F$ have a non-empty intersection, then there is a point in common to all the sets in $\F$ (see \cite{AmentaHelly2017} \cite{GoodmanDiscrete2017} for surveys on Helly's theorem and related results). A \textit{$k$-transversal} is a $k$-dimensional affine space that intersects each set of $\F$, so Helly's theorem provides a necessary and sufficient condition for $\F$ to have a $0$-transversal. In 1935, Vincensini was interested in the natural extension of Helly's theorem of finding necessary and sufficient conditions for a finite family of convex sets $\F$ in $\mathbb{R}^d$ to have a $k$-transversal for $k > 0$ \cite{vincensini1935figures}. In particular, Vincensini asked if there exists some constant $r=r(k,d)$ such that if every choice of $r$ or fewer sets in $\F$ has a $k$-transversal, then $\F$ has a $k$-transversal. However, Santal\'o provided examples showing that such a constant $r$ does not exist for any $k>0$ \cite{santalotheorem1940}. Many other related problems in geometric transversal theory have also been considered. For more information we refer the reader to the surveys \cite{GoodmanGeometric1993} \cite{GoodmanDiscrete2017}.

In 1957, Hadwiger made the first positive progress toward this extenson of Helly's theorem considered by Vincensini by proving the following theorem.

\begin{theorem}[Hadwiger \cite{HadwigerLines}]
    A finite family of pairwise disjoint convex sets in $\mathbb{R}^2$ has a $1$-transversal if and only if the sets in the family can be linearly ordered such that any three sets have a $1$-transversal consistent with the ordering.
\end{theorem}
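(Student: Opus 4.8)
\emph{The forward direction} is immediate: if a line $\ell$ meets every $C_i$, then for $i\neq j$ the subintervals $\ell\cap C_i$ and $\ell\cap C_j$ of $\ell$ are disjoint (a common point would lie in $C_i\cap C_j$), so the intervals $\{\ell\cap C_i\}$ are linearly ordered along $\ell$ once $\ell$ is oriented; ordering the family this way makes $\ell$ itself a transversal to any three members consistent with the order.

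\emph{For the converse} suppose the $C_i$ are pairwise disjoint convex sets and, relative to the order $C_1<\cdots<C_n$, every three of them have a consistent transversal; we want a transversal to all of them. The plan is to parametrize directed lines by $S^1\times\mathbb{R}$, with $(u,s)$ the line of direction $u$ at signed distance $s$ from the origin, oriented by $u$. For convex $C$ the set $I_C(u)=\{s:(u,s)\cap C\neq\emptyset\}$ is a closed interval (a projection of $C$), continuous in $u$. The key geometric input is a \emph{monotonicity lemma}: if $A,B$ are disjoint convex sets and $u$ is a direction, then every directed line of direction $u$ meeting both $A$ and $B$ meets them in the same order --- as $s$ ranges over $I_A(u)\cap I_B(u)$, the difference between the least $\langle u,\cdot\rangle$-coordinate of $B$ and the greatest $\langle u,\cdot\rangle$-coordinate of $A$ along the line $(u,s)$ varies continuously and is never zero (a zero forces a point of $A\cap B$), so its constant sign records the order; one reduces to the $C_i$ compact routinely. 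Hence for $i<j$ the set $A_{ij}\subseteq S^1$ of directions along which $C_i$ precedes $C_j$ is a well-defined closed arc, and $A_{ij}$, $A_{ji}$ are antipodal and disjoint, so $A_{ij}$ is shorter than a half-circle. Using one-dimensional Helly in the fibres $\{u\}\times\mathbb{R}$, a point $u\in\bigcap_{i<j}A_{ij}$ yields $\bigcap_i I_{C_i}(u)\neq\emptyset$, and any $s$ there makes $(u,s)$ a transversal to the whole family in the order $C_1,\dots,C_n$; and, by the same fibrewise Helly, the three-set hypothesis is equivalent to $A_{ij}\cap A_{jk}\cap A_{ik}\neq\emptyset$ for all $i<j<k$. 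So the theorem reduces to the circular statement: \emph{if closed arcs $A_{ij}$ $(1\le i<j\le n)$, each shorter than a half-circle, satisfy $A_{ij}\cap A_{jk}\cap A_{ik}\neq\emptyset$ whenever $i<j<k$, then $\bigcap_{i<j}A_{ij}\neq\emptyset$.}

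\emph{The main obstacle} is this last claim. I would try induction on $n$: any proper subfamily of the $C_i$ inherits the three-set hypothesis, so has an ordered transversal by induction, whose direction lies in every $A_{ij}$ for pairs $i<j$ within the subfamily; applied to the at most six sets indexed by any three of the pairs, this shows that \emph{every} three of the arcs $A_{ij}$ have a common point, and one then wants a Helly-type theorem for arcs shorter than a half-circle on $S^1$, where a bounded Helly number is available because such short arcs cannot cover the circle in the relevant ways. The genuinely hard points I expect are (i) the small base cases, where no proper subfamily is available and the geometry of the $C_i$ must be used directly, and (ii) isolating and applying the correct circular Helly statement: this connectedness-and-forced-intersection behaviour of spaces of ordered transversals is precisely what the Goodman--Pollack--Wenger theory codifies, and a clean write-up would likely invoke that machinery, or else run a direct continuity (sweeping) argument on $S^1\times\mathbb{R}$ built on the monotonicity lemma, rather than reprove it by hand.
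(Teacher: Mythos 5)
The paper does not prove this statement (it is quoted as Hadwiger's classical theorem and cited), so your proposal has to stand on its own. Its first half does: the forward direction is correct, and for compact sets the setup is sound --- the monotonicity lemma holds (for a fixed direction $u$ the sign of $\min\langle u,\cdot\rangle$ over $B\cap\ell_s$ minus $\max\langle u,\cdot\rangle$ over $A\cap\ell_s$ is continuous and nonvanishing on the interval of admissible $s$), and $A_{ij}$ is indeed a closed arc shorter than a half-circle, since it coincides with the set of directions $(b-a)/|b-a|$ with $a\in C_i$, $b\in C_j$, i.e.\ the direction set of the compact convex set $C_j-C_i$, which misses the origin.

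The genuine gap is the sentence ``the theorem reduces to the circular statement,'' because that circular statement is false. Take $n=4$ and place witness points for the triples $\{1,2,3\},\{1,2,4\},\{1,3,4\},\{2,3,4\}$ at angles $0,\ 0.6\pi,\ 0.9\pi,\ 1.5\pi$, and let each $A_{ij}$ be the shorter closed arc spanned by the two witnesses of the triples containing $\{i,j\}$: then $A_{12}=[0,0.6\pi]$, $A_{13}=[0,0.9\pi]$, $A_{23}=[1.5\pi,2\pi]$, $A_{14}=[0.6\pi,0.9\pi]$, $A_{24}=[0.6\pi,1.5\pi]$, $A_{34}=[0.9\pi,1.5\pi]$; every arc is shorter than a half-circle and every prescribed intersection $A_{ij}\cap A_{jk}\cap A_{ik}$ is nonempty, yet $A_{12}\cap A_{34}=\emptyset$. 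So the abstract arc statement cannot carry the proof; extra geometric properties of arcs arising from disjoint convex sets must be used (for instance, that when the projections of $C_i,C_j,C_k$ onto $u^\perp$ pairwise overlap, the ``precedes'' relation in direction $u$ is transitive, so $A_{ij}\cap A_{jk}\cap A_{ki}=\emptyset$). Your proposed repair --- induct on $n$, use ordered transversals of proper subfamilies to show that \emph{every} three arcs meet, then apply Helly for short arcs (which is fine, with Helly number $3$) --- only functions once any three pairs involve fewer than $n$ sets, i.e.\ for $n\geq 7$; the cases $n=4,5,6$, which you flag but do not treat, are exactly where the counterexample above shows the framework as stated is insufficient, and they are the real content of the theorem. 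A complete write-up would need either a direct geometric argument for these base cases or a different global argument (sweeping/extremal direction, or the topological method the paper uses for its own main theorem), so as it stands the proof is incomplete.
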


Hadwiger's theorem has been generalized in different ways, eventually resulting in an encompassing result for $(d-1)$-transversals in $\mathbb{R}^d$.
The first significant result in this direction was made by Goodman and Pollack who showed that in $\mathbb{R}^d$, the linear ordering in Hadwiger's theorem can be replaced with the notion of an order type of points in $\mathbb{R}^{d-1}$ given the additional condition that the family is $(d-2)$-separable, which generalizes the disjiontness condition in Hadwiger's theorem (see \cite{GoodmanHadwiger1988} for the precise statement of the theorem and definitions of these notions). Soon after, Wenger showed that the disjointness condition of Hadwiger's theorem can be dropped \cite{WengerIntersecting1990}. Finally, Pollack and Wenger completed the picture by proving the necessary and sufficient conditions required to have a $(d-1)$-transversal in $\mathbb{R}^d$ with no additional separability conditions on the family of sets \cite{PollackNecessary1990}. We note that several extensions of this result, including colorful generalizations, have been studied for instance in \cite{Anderson1996Oriented} \cite{ArochaSpearoids2002} \cite{ArochaColorful2008} \cite{HolmsenColored2016} \cite{holmsen2022colorful}.

Despite the previous work on the existence of $(d-1)$-transversals, no necessary and sufficient conditions for the existence of $k$-transversals in $\mathbb{R}^d$ for $0<k<d-1$ have been proven or conjectured. Our attempts to find such a condition for $(d-2)$-transversals in $\mathbb{R}^d$ eventually led us to instead a necessary and sufficient condition for complex $(d-1)$-transversals in $\mathbb{C}^d$. The statement of this result appears in Section \ref{sec:main}.

\section{Hyperplane transversals revisited}\label{sec:hyperplanes}

Here we will describe the result of Pollack and Wenger on $(d-1)$-transversals in $\mathbb{R}^d$ as presented in \cite{GoodmanDiscrete2017}, then we will discuss an equivalent rephrasing of this theorem to put our main result in Section \ref{sec:main} into context.

Let $\F$ be a finite family of convex sets in $\mathbb{R}^d$ and let $P$ be a subset of points in $\mathbb{R}^k$ for some $k$. We say that $\F$ \textit{separates consistently} with $P$ if there exists a map $\phi:\F \rightarrow P$ such that for any two subfamilies $\F_1,\ \F_2\subset \F$, we have that
\[
\conv(\F_1) \cap \conv(\F_2) = \emptyset \implies \conv(\phi(\F_1))\cap \conv(\phi(\F_2)) = \emptyset.
\]
Here we mean $\conv(\F_i)$ to be $\conv(\cup_{F\in \F_i} F)$. Another way to think about this condition is that if the sets of $\F_1$ can be separated from the sets of $\F_2$ by a hyperplane in $\mathbb{R}^d$, then the sets of points $\phi(\F_1)$ and $\phi(\F_2)$ can be separated by a hyperplane in $\mathbb{R}^k$. We also note that $\F$ separates consistently with $P$ if and only if
\[
\conv(\F_1) \cap \conv(\F_2) = \emptyset \implies \conv(\phi(\F_1))\cap \conv(\phi(\F_2)) = \emptyset.
\]
whenever $|\F_1| + |\F_2| \leq k+2$. This is a consequence of the well-known Kirchberger's theorem \cite{KirchbergerTheorem}, which states that if $U$ and $V$ are finite point sets in $\mathbb{R}^k$ such that for every set of $k+2$ points $S\subset U\cup V$, we have that $\conv(S\cap U)\cap \conv(S\cap V) = \emptyset$, then $\conv(U) \cap \conv(V) = \emptyset$.

We now have the terminology to state the Goodman-Pollack-Wenger theorem.
\begin{theorem}[Goodman-Pollack-Wenger theorem \cite{PollackNecessary1990}]\label{thm:GPW}
    A finite family of convex sets $\F$ in $\mathbb{R}^d$ has a $(d-1)$-transversal if and only if $\F$ separates consistently with a set $P\subset \mathbb{R}^{d-1}$.
\end{theorem}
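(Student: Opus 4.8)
The plan is to prove the two implications separately; the ``only if'' direction is elementary, while the ``if'' direction carries all the content, and there I would follow the architecture of the original Goodman--Pollack--Wenger proof.

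\emph{The ``only if'' direction.} Suppose $H$ is a $(d-1)$-transversal of $\F$. For each $F \in \F$ choose a point $\phi(F) \in F \cap H$, and fix an affine isomorphism $H \cong \mathbb{R}^{d-1}$, so that $P := \phi(\F) \subseteq \mathbb{R}^{d-1}$. For any subfamily $\F' \subseteq \F$, each chosen point $\phi(F)$ with $F \in \F'$ lies in $\conv(\F')$, hence $\conv(\phi(\F')) \subseteq \conv(\F')$. Therefore $\conv(\F_1) \cap \conv(\F_2) = \emptyset$ forces $\conv(\phi(\F_1)) \cap \conv(\phi(\F_2)) = \emptyset$, so $\F$ separates consistently with $P$.

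\emph{The ``if'' direction: setup.} We are handed a map $\phi : \F \to P \subseteq \mathbb{R}^{d-1}$ witnessing consistent separation and must construct a transversal hyperplane. After a routine approximation reducing to the case that $\F$ is a finite family of polytopes (outer polytope approximations only enlarge the convex hull of every subfamily, so the hypothesis is inherited, and a transversal of $\F$ is recovered in the limit by compactness of the relevant hyperplanes), parametrize oriented hyperplanes of $\mathbb{R}^d$ by a unit normal $a$ and an offset $b$. The hyperplane $\{x : \langle a, x\rangle = b\}$ meets $F$ exactly when $b \in I_F(a) := [\,\inf_{x \in F}\langle a, x\rangle,\ \sup_{x \in F}\langle a, x\rangle\,]$, so a transversal with normal $a$ exists iff $\bigcap_{F \in \F} I_F(a) \neq \emptyset$, that is, iff $a$ avoids the open convex cone $U_{F,F'} := \{\, a : \inf_F\langle a, \cdot\rangle > \sup_{F'}\langle a, \cdot\rangle \,\}$ for every ordered pair $F \neq F'$ of sets in $\F$. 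Each such cone contains no pair of antipodes, the collection $\{U_{F,F'}\}$ is invariant under $a \mapsto -a$, and $\F$ admits a transversal if and only if these cones do not cover $\mathbb{R}^d \setminus \{0\}$.

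\emph{The crux, and the main obstacle.} It remains to deduce this non-covering from the pair $(\phi, P)$, and this is the step I expect to be hard. Since there is no Helly number for hyperplane transversals --- Santal\'o's examples rule one out --- the argument cannot be purely local: the configuration $P$ has to be used as a single global combinatorial certificate, whose Radon-type combinatorics in $\mathbb{R}^{d-1}$ are forced to be compatible with the arrangement of the cones $U_{F,F'}$ in $\mathbb{R}^d$. Concretely, I would restrict the consistent-separation hypothesis to subfamilies $\F_1, \F_2$ with $|\F_1| + |\F_2| \leq d+1$ (permissible by Kirchberger's theorem, as already noted), pass to the order type of $P$ in $\mathbb{R}^{d-1}$ in the manner of Goodman and Pollack, and then apply Wenger's topological lifting --- a degree-theoretic argument of Borsuk--Ulam type on the space of hyperplanes --- to promote this combinatorial data to a genuine hyperplane $H$ together with points $q_F \in F \cap H$. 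Turning the point configuration $P$ into an actual transversal in this way is exactly the content that required the three successive papers of Goodman, Pollack, and Wenger, and I would not expect to avoid the topological input.
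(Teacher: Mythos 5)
Your ``only if'' direction is correct and complete: choosing $\phi(F)\in F\cap H$ and identifying $H$ with $\mathbb{R}^{d-1}$ gives $\conv(\phi(\F_i))\subseteq\conv(\F_i)$, so disjointness of hulls is inherited. The reduction of the ``if'' direction to a non-covering statement for the cones $U_{F,F'}$ is also sound (it is the one-dimensional Helly theorem applied to the intervals $I_F(a)$). But at that point the proposal stops being a proof: the entire content of the theorem is the deduction of the non-covering from the existence of $\phi$, and you explicitly defer that step to ``Wenger's topological lifting'' without carrying it out. Announcing that a Borsuk--Ulam-type argument will be applied is not the same as exhibiting the odd map, verifying its continuity and oddness, and showing that a zero of it either yields a transversal or contradicts consistent separation. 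As written, the central implication is asserted, not proved, so there is a genuine gap.

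For what it is worth, the missing argument is shorter than your sketch suggests and does not require the order-type machinery of Goodman--Pollack at all; it is the real prototype of the proof of Theorem \ref{thm:main} in Section \ref{sec:main}. Embed $\mathbb{R}^d$ as $\{x\in\mathbb{R}^{d+1}: x_{d+1}=1\}$, and for a unit vector $\mathbf{x}\in S^d$ let $p_{\mathbf{x},F}$ be the scalar such that $p_{\mathbf{x},F}\mathbf{x}$ is the point of the orthogonal projection of $F$ onto $\mathbb{R}\mathbf{x}$ nearest the origin. The map $f(\mathbf{x})=\sum_{F\in\F}\left(p_{\mathbf{x},F},\,p_{\mathbf{x},F}\phi(F)\right)\in\mathbb{R}\times\mathbb{R}^{d-1}\cong\mathbb{R}^d$ is odd and continuous, so Theorem \ref{thm:borsuk} gives a zero $\mathbf{x}_0$ (and $\mathbf{x}_0\neq\pm e_{d+1}$, since then all $p_{\mathbf{x}_0,F}$ would be equal and nonzero). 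If some $p_{\mathbf{x}_0,F}\neq 0$, set $\F_1=\{F: p_{\mathbf{x}_0,F}>0\}$ and $\F_2=\{F: p_{\mathbf{x}_0,F}<0\}$; the vanishing of $f(\mathbf{x}_0)$ is exactly an affine dependence witnessing $\conv(\phi(\F_1))\cap\conv(\phi(\F_2))\neq\emptyset$, hence $\conv(\F_1)\cap\conv(\F_2)\neq\emptyset$ by hypothesis, which is impossible because the hyperplane through the origin orthogonal to $\mathbf{x}_0$ strictly separates $\F_1$ from $\F_2$ (every point of $F\in\F_1$ projects to a scalar at least $p_{\mathbf{x}_0,F}>0$, by the nearest-point property and convexity of the projection). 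Hence all $p_{\mathbf{x}_0,F}=0$ and the intersection of $\mathbf{x}_0^{\perp}$ with the affine copy of $\mathbb{R}^d$ is the desired transversal. Until you supply an argument of this kind, the ``if'' direction remains unproved.
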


The condition in our main result of Section \ref{sec:main} is quite similar to the condition in Theorem \ref{thm:GPW}, and we will first provide a slight rephrasing of the definition for $\F$ to separate consistently with $P$ in order to make this similarity more apparent. By taking the contrapositive of the implication in the definition of separating consistently, we may equivalently say that $\F$ separates consistently with $P$ if there exists a map $\phi:\F \rightarrow P\subset \mathbb{R}^k$ such that
\[
\conv(\phi(\F_1))\cap \conv(\phi(\F_2)) \neq \emptyset \implies \conv(\F_1) \cap \conv(\F_2) \neq \emptyset.
\]
In other words, the existence of an affine dependence 
\[
\sum_{F\in \F_1\cup \F_2} a_F=0,\, \sum_{F\in \F_1\cup \F_2} a_F\phi(F)= 0
\]
where $a_F \geq 0$ for all $F\in \F_1$ (not all 0) and $a_F\leq 0$ for all $F\in \F_2$ implies the existence of points $p_F\in F$ and real numbers $r_F\geq 0$ such that 
\[
\sum_{F\in \F_1\cup \F_2} r_Fa_F=0,\, \sum_{F\in \F_1\cup \F_2} (r_Fa_F)p_F= 0
\]
is an affine dependence of the points $p_F$ and the numbers $r_Fa_F$ are not all $0$.

\section{Main result}\label{sec:main}

In this section, we provide a necessary and sufficient condition for a finite family $\F$ of convex sets in $\mathbb{C}^d$ to have a \textit{complex} $(d-1)$-transversal, where a complex $(d-1)$-transversal here is a complex $(d-1)$-dimensional affine subspace of $\mathbb{C}^d$ that intersects each set in $\F$.

First, following our discussion from Section \ref{sec:hyperplanes}, we make the following definition in order to articulate our main theorem, Theorem \ref{thm:main}.

\begin{definition}\label{def:main}
Let $\F$ be a finite family of convex sets in $\mathbb{C}^d$, and let $P\subset \mathbb{C}^k$. We say that $\F$ is \textit{dependency-consistent} with $P$ if there exists a map $\phi:\F \rightarrow P$ such that for every subfamily $\F'\subset \F$ and every affine dependence
\[
\sum_{F\in \F'}a_F=0,\, \sum_{F\in \F'}a_F\phi(F)=0
\]
for complex numbers $a_F$, there exist real numbers $r_F\geq 0$ and points $p_F\in F$ for $F\in \F'$ such that
\[
\sum_{F\in \F'}r_Fa_F=0,\, \sum_{F\in \F'}(r_Fa_F)p_F=0
\]
where not all of the values $r_Fa_F$ are 0.
\end{definition}

\begin{remark}\label{rmk}
For the purpose of Theorem \ref{thm:main}, we could add the additional restriction that $|\F'|\leq 2k+3$ in Definition \ref{def:main}, and the statement of Theorem \ref{thm:main} still holds. This is due to the following reasoning. By associating the points $(a_F\phi(F),a_F)$ with points in $\mathbb{R}^{2k+2}$, we have that the set of points $\{(a_F\phi(F),a_F)\}_{F\in \F'}$ contains $0\in \mathbb{R}^{2k+2}$ in its convex hull. Therefore, by Carath\'eodory's Theorem, there exist $m\leq 2k+3$ sets $F_1,\dots,F_m\in \F'$ and real numbers $s_i>0$ such that $\sum_{i=1}^m s_i(a_{F_i}\phi(F_i),a_{F_i}) = 0$. In other words, there is the complex affine dependence 
\[
\sum_{i=1}^m s_ia_{F_i}=0,\, \sum_{i=1}^m (s_ia_{F_i})\phi(F_i)=0
\]
among the points $\phi(F_1),\dots,\phi(F_m)$.

\end{remark}

In our proof of Theorem \ref{thm:main} will make use of the Borsuk-Ulam theorem below. We note that the Borsuk-Ulam theorem was also employed in the proof of the Goodman-Pollack-Wenger theorem in \cite{PollackNecessary1990}, and in fact our proof of Theorem \ref{thm:main} takes significant inspiration from this proof.
\begin{theorem}[Borsuk-Ulam theorem]\label{thm:borsuk}
If $n\geq m$ and $f:S^n \rightarrow \mathbb{R}^m$ is an odd, continuous map, i.e. $f(-x) = -f(x)$ for all $x\in S^n$, then $0\in \textrm{Im}(f)$.
\end{theorem}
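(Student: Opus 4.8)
The Borsuk-Ulam theorem is classical, but here is the route I would take. The first step is to reduce it to the statement that \emph{there is no continuous odd map $g\colon S^k\to S^{k-1}$ for any $k\ge 1$}. Indeed, if $f\colon S^n\to\mathbb{R}^m$ were odd, continuous, and had $0\notin\textrm{Im}(f)$, then $x\mapsto f(x)/\|f(x)\|$ would be a continuous odd map $S^n\to S^{m-1}$; precomposing with the inclusion of a great subsphere $S^m\hookrightarrow S^n$ (available since $m\le n$, and this inclusion is antipodal) yields a continuous odd map $S^m\to S^{m-1}$, which the reduced statement forbids. The case $k=1$ is immediate, since a continuous map from the connected space $S^1$ to $S^0=\{\pm 1\}$ is constant and hence cannot be odd.

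For $k\ge 2$, suppose $g\colon S^k\to S^{k-1}$ is continuous and odd. Since $g$ commutes with the antipodal maps, it descends to a continuous map $\bar g\colon \mathbb{RP}^k\to\mathbb{RP}^{k-1}$ fitting into the obvious commuting square with the double covers. The key step is to see that $\bar g$ is nontrivial on first homology with $\mathbb{Z}/2$ coefficients: a loop in $\mathbb{RP}^k$ generating $H_1(\mathbb{RP}^k;\mathbb{Z}/2)\cong\mathbb{Z}/2$ lifts to a path in $S^k$ from some $x$ to $-x$, and applying $g$ gives a path in $S^{k-1}$ from $g(x)$ to $-g(x)$; its image in $\mathbb{RP}^{k-1}$ is a loop that does not lift to a loop in $S^{k-1}$, so it is nonzero in $H_1(\mathbb{RP}^{k-1};\mathbb{Z}/2)$. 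Dually, $\bar g^{*}$ carries the generator $\alpha\in H^1(\mathbb{RP}^{k-1};\mathbb{Z}/2)$ to the generator $\alpha\in H^1(\mathbb{RP}^{k};\mathbb{Z}/2)$.

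Now I would invoke the computation $H^{*}(\mathbb{RP}^{N};\mathbb{Z}/2)\cong(\mathbb{Z}/2)[\alpha]/(\alpha^{N+1})$. Because $\bar g^{*}$ is a ring homomorphism sending $\alpha$ to $\alpha$, it sends $\alpha^{k}\in H^{k}(\mathbb{RP}^{k-1};\mathbb{Z}/2)$ to $\alpha^{k}\in H^{k}(\mathbb{RP}^{k};\mathbb{Z}/2)$; but $\alpha^{k}=0$ in $H^{k}(\mathbb{RP}^{k-1};\mathbb{Z}/2)$ since $k>k-1$, while $\alpha^{k}\neq 0$ in $H^{k}(\mathbb{RP}^{k};\mathbb{Z}/2)$, a contradiction. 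The only substantive input, and hence the main obstacle, is this cohomology ring computation (equivalently, a way of detecting the mod-$2$ degree of a self-map of projective space); a reader who prefers to stay within combinatorics can instead substitute Tucker's lemma on antipodally symmetric triangulations of $S^{k}$ for this step and deduce the nonexistence of an odd simplicial, hence continuous, map $S^{k}\to S^{k-1}$ by a path-following parity argument.
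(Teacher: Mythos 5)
The paper does not prove this statement at all: the Borsuk--Ulam theorem is quoted as a classical black box (it is the topological engine in the proof of Theorem \ref{thm:main}, exactly as in Pollack and Wenger's original argument), so there is no in-paper proof to compare yours against. Your argument is the standard cohomological proof and it is correct. The reduction is fine: for $m=0$ the statement is vacuous, and for $m\geq 1$ an odd nonvanishing $f$ normalizes to an odd map $S^n\to S^{m-1}$ whose restriction to a great $S^m\subset S^n$ is still odd, so everything rests on the nonexistence of an odd map $S^k\to S^{k-1}$ for $k\geq 1$; the $k=1$ case by connectedness is right. For $k\geq 2$ your lifting argument correctly shows that $\bar g_*$ is nonzero on $H_1(\cdot;\mathbb{Z}/2)$; note the mild subtlety that when $k=2$ the target is $\mathbb{R}P^{1}\simeq S^1$ with $\pi_1\cong\mathbb{Z}$ rather than $\mathbb{Z}/2$, but since you phrase the conclusion via the loop-lifting criterion and mod $2$ homology (a non-lifting loop has odd class), the step goes through uniformly, and then $\bar g^{*}(\alpha)=\alpha$ on $H^1(\cdot;\mathbb{Z}/2)$ by duality over the field $\mathbb{Z}/2$. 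The cup-product contradiction, $\alpha^{k}=0$ in $H^{*}(\mathbb{R}P^{k-1};\mathbb{Z}/2)$ while $\alpha^{k}\neq 0$ in $H^{k}(\mathbb{R}P^{k};\mathbb{Z}/2)$, is exactly the classical argument, with the ring structure of $H^{*}(\mathbb{R}P^{N};\mathbb{Z}/2)$ as the one substantive external input; the Tucker's-lemma alternative you mention is likewise a standard complete route. So the proposal is a valid proof of the cited theorem, differing from the paper only in that the paper simply invokes the result without proof.
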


We are now ready to state and prove the main result. The non-trivial direction, and the contents of the following proof, is the ``if'' part of the statement. The ``only if'' direction is straightforward since a complex $(d-1)$-transversal can be associated with $\mathbb{C}^{d-1}$, and $P$ can be constructed by choosing a point in the intersection of a set from $\F$ and the complex $(d-1)$-transversal for each set in $\F$.

\begin{theorem}[Main theorem]\label{thm:main}
    
A finite family of convex sets $\F$ in $\mathbb{C}^{d}$ has a complex $(d-1)$-transversal if and only if $\F$ is dependency-consistent with a set $P\subset \mathbb{C}^{d-1}$.
\end{theorem}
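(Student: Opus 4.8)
The plan is to follow the template of the Pollack--Wenger argument, replacing the role of the real hyperplane by a complex hyperplane and exploiting the fact that a complex hyperplane in $\mathbb{C}^d$ is, as a real object, a codimension-$2$ affine subspace of $\mathbb{R}^{2d}$ whose normal directions come in a complex line (i.e. form a real $2$-plane closed under multiplication by $i$). Concretely: a complex $(d-1)$-transversal is the common zero set of one complex affine functional $z \mapsto \langle w, z\rangle - c$ with $w \in \mathbb{C}^d\setminus\{0\}$, $c\in\mathbb{C}$. So I want to produce such a pair $(w,c)$ so that the affine complex hyperplane $H = \{z : \langle w,z\rangle = c\}$ meets every $F\in\F$. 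Fix the map $\phi:\F\to P\subset\mathbb{C}^{d-1}$ witnessing dependency-consistency. The idea is to build, for each candidate ``direction data,'' a test map on a sphere and apply Borsuk--Ulam (Theorem~\ref{thm:borsuk}) to force the existence of a genuine transversal.

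First I would set up the configuration space. Think of $P = \{\phi(F)\}\subset\mathbb{C}^{d-1}$ together with the $\F$ itself; lift to $\mathbb{C}^d$ by appending a coordinate, i.e. send $\phi(F) \mapsto (\phi(F),1)\in\mathbb{C}^d$ and encode an affine functional on $\mathbb{C}^{d-1}$ as a linear functional $W\in\mathbb{C}^d$ (so $W = (w,-c)$). The plan is to consider the sphere $S$ inside the real vector space $\mathbb{C}^d \cong \mathbb{R}^{2d}$ of such $W$'s, and for each $W\in S$ to orthogonally (over $\mathbb{R}$, after a choice of inner product) split $P$ into the part on which $\mathrm{Re}\langle W,(\phi(F),1)\rangle > 0$, the part where it is $<0$, and boundary. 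Then I build an odd map $f:S^{2d-1}\to\mathbb{R}^{2d-2}$ (or the appropriate dimension) recording a weighted barycenter / signed-separation vector of the images $\phi(F)$ under $W$, in the spirit of the Pollack--Wenger reduction; the antipodal symmetry $W\mapsto -W$ swaps the two sides, which is exactly the oddness. Borsuk--Ulam then yields a $W_0$ for which the two sides cannot be strictly separated in $\mathbb{R}^{2}$ after collapsing by $W_0$ — i.e., one obtains a complex affine dependence $\sum a_F = 0$, $\sum a_F\phi(F) = 0$ with a sign pattern controlled by $W_0$.

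Now invoke dependency-consistency: this affine dependence among the $\phi(F)$ yields $r_F\ge 0$, $p_F\in F$, not all $r_Fa_F$ zero, with $\sum r_Fa_F = 0$ and $\sum (r_Fa_F)p_F = 0$. The point is that the sign/argument information carried by the $a_F$ (inherited from $W_0$) together with this dependence forces the complex hyperplane $H_{W_0} = \{z : \mathrm{Re}\,\langle W_0,(z,1)\rangle \text{-type condition}\}$ — more precisely the complex hyperplane whose real part is pinned by $W_0$ — to actually meet each $F$: if some $F_0$ were strictly on one side of every complex hyperplane compatible with the data, the weights $r_{F_0}a_{F_0}$ would be forced to vanish, and running this over all the separating directions (using that a complex hyperplane has a whole circle's worth of real ``defining'' functionals $e^{i\theta}W_0$) one shows every $F$ is pierced. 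Some bookkeeping with Carathéodory and Remark~\ref{rmk} keeps the dependencies finite.

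The main obstacle I anticipate is the dimension/parity accounting in the Borsuk--Ulam step: in the real theorem one maps $S^{d-1}\to\mathbb{R}^{d-1}$, and the ``$-1$'' is exactly the drop from a hyperplane ($d$ parameters up to scaling) to its $(d-1)$-dimensional order type. In the complex setting the defining functional $W\in\mathbb{C}^d$ has $2d$ real parameters, but we quotient not just by real scaling but by the circle action $W\mapsto e^{i\theta}W$ (since $e^{i\theta}W$ defines the same complex hyperplane), so the true parameter count is $2d-2$, matching $\dim_{\mathbb{R}}\mathbb{C}^{d-1}$. Getting an honest odd map out of $S^{2d-1}$ (or a lens-space/quotient version) into $\mathbb{R}^{2d-2}$ with the right equivariance, so that the zero it produces really does encode a complex — not merely real — affine dependence among the $\phi(F)$, is the delicate part; this is presumably where the proof needs to do real work beyond transcribing \cite{PollackNecessary1990}, and where Definition~\ref{def:main} being stated with \emph{complex} coefficients $a_F$ (rather than real ones, as in the rephrased real statement) is essential.
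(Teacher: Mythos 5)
There is a genuine gap: the construction at the heart of the argument is missing, and the Borsuk--Ulam step as you sketch it is aimed at the wrong space. In the paper's proof the sphere parametrizes \emph{candidate transversals in the source space}: one embeds $\mathbb{C}^d$ as the affine chart $H=\{z_{d+1}=1\}\subset\mathbb{C}^{d+1}$ and lets $\mathbf{x}$ range over unit vectors of $\mathbb{C}^{d+1}$, i.e.\ over $S^{2d+1}$ (not $S^{2d-1}$); the candidate transversal is $\mathbf{x}^{\perp}\cap H$. For each $\mathbf{x}$ one orthogonally projects each $F$ onto the complex line $\mathbb{C}\mathbf{x}$, lets $p_{\mathbf{x},F}\mathbf{x}$ be the point of that projection closest to the origin, and sets $f(\mathbf{x})=\sum_{F}\left(p_{\mathbf{x},F},\,\overline{p_{\mathbf{x},F}}\,\phi(F)\right)\in\mathbb{C}\times\mathbb{C}^{d-1}\cong\mathbb{R}^{2d}$. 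Your $W$ instead encodes a functional on the lifted \emph{target} $\mathbb{C}^{d-1}$, i.e.\ a hyperplane among the points $\phi(F)$; a zero of such a map would at best yield a dependence among the $\phi(F)$, after which there is no distinguished complex hyperplane in the source $\mathbb{C}^d$ to serve as the transversal --- your closing paragraph, which is supposed to produce one, never constructs it. The dimension count also fails: a complex affine dependence $\sum a_F=0$, $\sum a_F\phi(F)=0$ amounts to $2d$ real equations, so an odd map into $\mathbb{R}^{2d-2}$ cannot certify it. Finally, the $S^1$-equivariance difficulty you flag as "the delicate part" is an artifact of your setup and simply does not arise in the paper's: the map is defined on the full sphere $S^{2d+1}$ and only oddness under $\mathbf{x}\mapsto-\mathbf{x}$ is used; no quotient by the circle action $\mathbf{x}\mapsto e^{i\theta}\mathbf{x}$ is ever taken.

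The second missing ingredient is the mechanism that turns the dependence supplied by dependency-consistency into a transversal. In the paper, if $T=\mathbf{x}_0^{\perp}\cap H$ misses some set, one restricts to $\F'=\{F: p_{\mathbf{x}_0,F}\neq 0\}$; the equation $f(\mathbf{x}_0)=0$ is precisely a complex affine dependence among the $\phi(F)$ with coefficients $\overline{p_{\mathbf{x}_0,F}}$, so dependency-consistency yields $r_F\geq 0$ and $p_F\in F$ with $\sum_{F\in\F'}\overline{r_Fp_{\mathbf{x}_0,F}}\,p_F=0$. Applying the orthogonal projection onto $\mathbb{C}\mathbf{x}_0$ to this identity gives a contradiction, because the closest-point property of the convex set $F_{\mathbf{x}_0}$ forces the angle between $p_{\mathbf{x}_0,F}\mathbf{x}_0$ and $\textrm{proj}_{\mathbf{x}_0}(p_F)$ to be acute, hence $\textrm{Re}\left(\overline{p_{\mathbf{x}_0,F}}c_F\right)>0$ for every $F\in\F'$ and the projected sum cannot vanish. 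This acute-angle argument is the heart of the proof and is entirely absent from your sketch; the assertion that "the weights would be forced to vanish \dots\ running this over all the separating directions" is not a substitute for it.
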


\begin{proof}
We associate $\mathbb{C}^d$ with the set of points 
\[
H=\{(z_1,\dots,z_{d+1})\in \mathbb{C}^{d+1} \mid z_{d+1}=1\}.
\]
Thus, we think of the convex sets from the family $\F$ as lying in $H$ as well.

We will now define a continuous odd map $f$ from $\{\mathbf{z}\in \mathbb{C}^{d+1} \mid ||\mathbf{z}||=1\}$, which can be identified with the $(2d+1)$-dimensional sphere, $S^{2d+1}$, to $\mathbb{R}^{2d}$. Such a map will have a zero by Theorem \ref{thm:borsuk}, and we will show that for the map that we define, this zero will correspond to a complex $(d-1)$-transversal of $\F$.

Let $\mathbf{x}\in \{\mathbf{z}\in \mathbb{C}^{d+1} \mid ||\mathbf{z}||=1\}$, and let $P_\mathbf{x}$ be the complex subspace spanned by $\mathbf{x}$. Additionally, for a set $F\in \F$ we denote the orthogonal projection of $F$ onto $P_\mathbf{x}$ by $F_\mathbf{x}$; note that $F_\mathbf{x}$ is a convex set.

For each set $F\in \F$, let $p_{\mathbf{x},F}$ be the complex number $c$ such that $c\mathbf{x}$ is the element of $F_\mathbf{x}$ that is closest in distance to $\mathbf{0}$, the origin in $\mathbb{C}^{d+1}$. Note that the convexity of $F_\mathbf{x}$ implies that $c$ is unique. Furthermore, for a fixed set $F$, the point $p_{\mathbf{x},F}$ varies continuously with $\mathbf{x}$. Let $\phi$ be the map witnessing the fact that $\F$ is dependency-consistent with $P$; we define $f(\mathbf{x})\in \mathbb{C}\times \mathbb{C}^{d-1}$, which can be identified with $\mathbb{R}^{2d}$, as follows

\[
f(\mathbf{x}) = \sum_{F\in \F}\left(p_{\mathbf{x},F},\, \overline{p_{\mathbf{x},F}}\phi(F)\right),
\]
where $\overline{p_{\mathbf{x},F}}$ denotes the complex conjugate of $p_{\mathbf{x},F}$. Because $p_{\mathbf{x},F}=-p_{-\mathbf{x},F}$ and $p_{\mathbf{x},F}$ varies continuously with $\mathbf{x}$, we have that $f$ is indeed a continuous odd map and thus has a zero, say $\mathbf{x}_0$, by Theorem \ref{thm:borsuk}.

It cannot be the case that $\mathbf{x}_0=(0,\dots,0,z_{d+1})$ for some $z_{d+1}\neq 0$, since in that case, each $F_x$ is the point $(0,\dots,0,1)$ and thus $p_{x_0,F} = 1/z_{d+1}$ for every $F \in \F$.   If $p_{x_0,F} = 1/z_{d+1}$ for each $F$, then clearly $\sum_{F \in \F} p_{x_0,F} \neq 0$. Thus, $\mathbf{x}_0\neq (0,\dots,0,z_{d+1})$, and in particular, this implies that the orthogonal complement of $\mathbf{x}_0$ intersects $H$ in a complex $(d-1)$-dimensional affine space, say $T$. If $T$ intersects each set in $\F$, then $\F$ has a complex $(d-1)$-transversal, and we are done. Therefore, we assume that $T$ does not intersect each set in $\F$, so we have that some of the values $p_{\mathbf{x}_0,F}$ must be nonzero. We show that this assumption leads to a contradiction.

Let $\F'\subset \F$ be the family of sets $F$ such that $p_{\mathbf{x}_0,F}\neq 0$. Since $f(\mathbf{x}_0) = 0$, we have the affine dependence
\[
\sum_{F\in \F'}p_{\mathbf{x}_0,F} = 0,\, \sum_{F\in \F'}\overline{p_{\mathbf{x}_0,F}}\phi(F)=0.
\]
Since $\F$ is dependency-consistent with $P$, we have that there exist points $p_F \in F$ for all $F\in \F'$ and real numbers $r_F\geq 0$ such that 
\[
\sum_{F\in \F'}r_Fp_{\mathbf{x}_0,F} = 0,\, \sum_{F\in \F'}\overline{r_Fp_{\mathbf{x}_0,F}}p_F=0.
\]
(Note that at this step, we could assume that $|\F'|\leq 2d+1$ by Remark \ref{rmk}). Now, consider the $\mathbb{C}$-linear map $\textrm{proj}_{\mathbf{x}_0}: \mathbb{C}^{d+1}\rightarrow P_{\mathbf{x}_0}$ given by orthogonal projection onto $P_{\mathbf{x}_0}$. By linearity, we have that 
\[
\textrm{proj}_{\mathbf{x}_0}\left(\sum_{F\in \F'}\overline{r_Fp_{\mathbf{x}_0,F}}p_F\right) = \sum_{F\in \F'}\overline{r_Fp_{\mathbf{x}_0,F}}\textrm{proj}_{\mathbf{x}_0}(p_F) = 0.
\]
However, this immediately leads to a contradiction. Indeed, recall that $p_{\mathbf{x}_0,F}\mathbf{x}_0$ is the point of $F_{\mathbf{x}_0}$ closest to $\mathbf{0}$ and $F_{\mathbf{x}_0}\ni \textrm{proj}_{\mathbf{x}_0}(p_F)$ is a convex set. This implies that the angle between $p_{\mathbf{x}_0,F}\mathbf{x}_0$ and $\textrm{proj}_{\mathbf{x}_0}(p_F)$ is less than 90 degrees (see Figure \ref{fig}). Therefore, writing $\textrm{proj}_{\mathbf{x}_0}(p_F) = c_F\mathbf{x}_0$, we have that the absolute difference in argument between the complex numbers $p_{\mathbf{x}_0,F}$ and $c_F$ is less than 90 degrees, which implies that $\textrm{Re}(\overline{p_{\mathbf{x}_0,F}}c_F) > 0$. This is in contradiction to the fact that $\sum_{F\in \F'}\overline{r_Fp_{\mathbf{x}_0,F}}\textrm{proj}_{\mathbf{x}_0}(p_F) = 0$, and hence completes the proof.

\end{proof}

\begin{figure}[h!]
    \centering
    \begin{tikzpicture}[scale = 1.2]
\filldraw [black] (0,0) circle (2pt) node[below]{$\mathbf{0}$};
        \draw [black] (50:3cm) circle [x radius=1.5cm, y radius=1 cm, rotate=240];
        \draw[->] [black] (0,0) -- (0.95*1.1,0.95*1.05);
        \filldraw [black] (1.1,1.05) circle (1pt) node[left]{$p_{\mathbf{x}_0,F}\mathbf{x}_0$};

        \draw[->] [black] (0,0) -- (0.97*2.5,0.97*1.7);
        \filldraw [black] (2.5,1.7) circle (1pt) node[right]{$\ \  \textrm{proj}_{\mathbf{x}_0}(p_F)$};

        \filldraw [black] (2,3) circle (0pt) node[below]{$F_{\mathbf{x}_0}$};
    \end{tikzpicture}
    \caption{Depiction of $F_{\mathbf{x}_0}$, $p_{\mathbf{x}_0,F}\mathbf{x}_0$, and $\textrm{proj}_{\mathbf{x}_0}(p_F)$ in the complex plane $P_{\mathbf{x}_0}$.}
    \label{fig}
\end{figure}
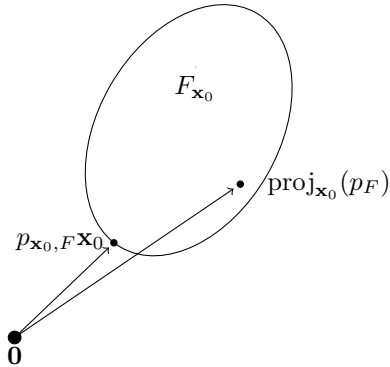

\section{Acknowledgements}
The author would like to thank Shira Zerbib for her helpful comments on a first draft of this paper, and the anonymous reviewers for their suggested improvements.

\newcommand{\etalchar}[1]{$^{#1}$}
\providecommand{\bysame}{\leavevmode\hbox to3em{\hrulefill}\thinspace}
\providecommand{\MR}{\relax\ifhmode\unskip\space\fi MR }
\providecommand{\MRhref}[2]{%
  \href{http://www.ams.org/mathscinet-getitem?mr=#1}{#2}
}
\providecommand{\href}[2]{#2}


\begin{thebibliography}{ABM{\etalchar{+}}02}

\bibitem[ABM{\etalchar{+}}02]{ArochaSpearoids2002}
J.~L. Arocha, J.~Bracho, L.~Montejano, D.~Oliveros, and R.~Strausz,
  \emph{Separoids, their categories and a {H}adwiger-type theorem for
  transversals}, Discrete Comput. Geom. \textbf{27} (2002), no.~3, 377--385.
  \MR{1921560}

\bibitem[ABM08]{ArochaColorful2008}
Jorge~L. Arocha, Javier Bracho, and Luis Montejano, \emph{A colorful theorem on
  transversal lines to plane convex sets}, Combinatorica \textbf{28} (2008),
  no.~4, 379--384. \MR{2452840}

\bibitem[ADLS17]{AmentaHelly2017}
Nina Amenta, Jes\'{u}s~A. De~Loera, and Pablo Sober\'{o}n, \emph{Helly's
  theorem: new variations and applications}, Algebraic and geometric methods in
  discrete mathematics, Contemp. Math., vol. 685, Amer. Math. Soc., Providence,
  RI, 2017, pp.~55--95. \MR{3625571}

\bibitem[AW96]{Anderson1996Oriented}
Laura Anderson and Rephael Wenger, \emph{Oriented matroids and hyperplane
  transversals}, Adv. Math. \textbf{119} (1996), no.~1, 117--125. \MR{1383885}

\bibitem[GOT18]{GoodmanDiscrete2017}
Jacob~E. Goodman, Joseph O'Rourke, and Csaba~D. T\'{o}th (eds.), \emph{Handbook
  of discrete and computational geometry}, Discrete Mathematics and its
  Applications (Boca Raton), CRC Press, Boca Raton, FL, 2018, Third edition of
  [ MR1730156]. \MR{3793131}

\bibitem[GP88]{GoodmanHadwiger1988}
Jacob~E. Goodman and Richard Pollack, \emph{Hadwiger's transversal theorem in
  higher dimensions}, J. Amer. Math. Soc. \textbf{1} (1988), no.~2, 301--309.
  \MR{928260}

\bibitem[GPW93]{GoodmanGeometric1993}
Jacob~E. Goodman, Richard Pollack, and Rephael Wenger, \emph{Geometric
  transversal theory}, New trends in discrete and computational geometry,
  Algorithms Combin., vol.~10, Springer, Berlin, 1993, pp.~163--198.
  \MR{1228043}

\bibitem[Had57]{HadwigerLines}
H.~Hadwiger, \emph{Ueber {E}ibereiche mit gemeinsamer {T}reffgeraden},
  Portugal. Math. \textbf{16} (1957), 23--29. \MR{99015}

\bibitem[Hel23]{helly}
E.~Helly, \emph{\"uber mengen konvexer k\"orper mit gemeinschaftlichen
  punkten}, Jahresber. Deutsch. Math. Verein. \textbf{32} (1923), 175--176.

\bibitem[Hol22]{holmsen2022colorful}
Andreas~F. Holmsen, \emph{A colorful goodman-pollack-wenger theorem}, 2022.

\bibitem[HRP16]{HolmsenColored2016}
Andreas~F. Holmsen and Edgardo Rold\'{a}n-Pensado, \emph{The colored {H}adwiger
  transversal theorem in {$\Bbb{R}^d$}}, Combinatorica \textbf{36} (2016),
  no.~4, 417--429. \MR{3537034}

\bibitem[Kir03]{KirchbergerTheorem}
Paul Kirchberger, \emph{\"{U}ber {T}chebychefsche {A}nn\"{a}herungsmethoden},
  Math. Ann. \textbf{57} (1903), no.~4, 509--540. \MR{1511222}

\bibitem[PW90]{PollackNecessary1990}
R.~Pollack and R.~Wenger, \emph{Necessary and sufficient conditions for
  hyperplane transversals}, Combinatorica \textbf{10} (1990), no.~3, 307--311.
  \MR{1092546}

\bibitem[San40]{santalotheorem1940}
L.~A. Santal\'{o}, \emph{A theorem on sets of parallelepipeds with parallel
  edges}, Publ. Inst. Mat. Univ. Nac. Litoral \textbf{2} (1940), 49--60.
  \MR{3726}

\bibitem[Vin35]{vincensini1935figures}
Paul Vincensini, \emph{Figures convexes et vari{\'e}t{\'e}s lin{\'e}aires de
  l’espace euclidien {\`a} n dimensions}, Bull. Sci. Math \textbf{59} (1935),
  163--174.

\bibitem[Wen90]{WengerIntersecting1990}
Rephael Wenger, \emph{A generalization of {H}adwiger's transversal theorem to
  intersecting sets}, Discrete Comput. Geom. \textbf{5} (1990), no.~4,
  383--388. \MR{1043720}

\end{thebibliography}
\end{document}